\def\Ddots{\mathinner{\mkern1mu\raise\p@
\vbox{\kern7\p@\hbox{.}}\mkern2mu
\raise4\p@\hbox{.}\mkern2mu\raise7\p@\hbox{.}\mkern1mu}}
\def\Xint#1{\mathchoice
{\XXint\displaystyle\textstyle{#1}}%
{\XXint\textstyle\scriptstyle{#1}}%
{\XXint\scriptstyle\scriptscriptstyle{#1}}%
{\XXint\scriptscriptstyle\scriptscriptstyle{#1}}%
\!\int}
\def\XXint#1#2#3{{\setbox0=\hbox{$#1{#2#3}{\int}$}
\vcenter{\hbox{$#2#3$}}\kern-.5\wd0}}
\def\dashint{\Xint-}
\newtheorem{theorem}{Theorem}[section]
\theoremstyle{definition}
\newtheorem{lemma}[theorem]{Lemma}
\def\al{{\alpha}}
\def\R{\mathbb R}
\def\N{\mathbb N}
\def\ra{\rightarrow}
\def\bey{\begin{eqnarray*}}
\def\eey{\end{eqnarray*}}
\DeclareMathOperator{\supp}{supp}
\def\M{{\mathcal M}}
\newcommand{\cz}{Calder\'on-Zygmund}
\begin{document}

\subjclass[2010]{Primary: 42B20, 47B07; Secondary: 42B25, 47G99}
\keywords{Bilinear operators, compact operators, singular integrals, Calder\'on-Zygmund theory, commutators, Muckenhoupt weights, vector valued weights, weighted Lebesgue spaces}

\title[Compact bilinear commutators: The weighted case]{Compact bilinear commutators: The weighted case}

\date{\today}

\author[\'A. B\'enyi]{\'Arp\'ad B\'enyi}

\address{%
Department of Mathematics \\
Western Washington University\\
516 High Street\\
Bellingham, WA 98225, USA}
\email{arpad.benyi@wwu.edu}

\author[W. Dami\'an]{Wendol\'in Dami\'an}
\address{%
Departamento de An\'alisis Matem\'atico, Facultad de Matem\'aticas \\
Universidad de Sevilla\\
41080 Sevilla, Spain}
\email{wdamian@us.es}

\author[K. Moen]{Kabe Moen}
\address{%
Department of Mathematics\\
University of Alabama \\
Tuscaloosa, AL 35487, USA}
\email{kabe.moen@ua.edu}

\author[R. H. Torres]{Rodolfo H. Torres}
\address{%
Department of Mathematics\\
University of Kansas\\
Lawrence, KS 66045, USA}
\email{torres@math.ku.edu}

\thanks{\'A. B. partially supported by a grant from the Simons Foundation (No.~246024). W. D. is supported by the Junta de Andaluc\'ia (P09-FQM-47459) and the Spanish Ministry of Science and Innovation (MTM2012-30748).  K. M. and R.H. T. partially supported by NSF grants DMS 1201504 and DMS 1069015, respectively.}

\begin{abstract}
{Commutators of bilinear \cz\, operators and multiplication by functions in a certain subspace of the space of functions of bounded mean oscillations are shown to be compact on appropriate products of weighted Lebesgue spaces.}
\end{abstract}

\maketitle

\section{Introduction and statements of main results}

In \cite{BT}, B\'enyi and Torres revisited a notion of compactness in a bilinear setting, which was first introduced by Calder\'on in his fundamental paper on interpolation \cite{C}. They showed in \cite{BT} that commutators of bilinear \cz\, operators with multiplication by $CMO$ functions are compact bilinear operators from $L^{p_1}\times L^{p_2}\to L^p$ for $1<p_1, p_2<\infty$ and $1/p_1+1/p_2=1/p \leq 1$, thus giving an extension of a classical result of Uchiyama \cite{U} from the linear setting. In a subsequent joint work with Dami\'an and Moen \cite{BDMT}, the scope of the notion of compactness was expanded to include the commutators of a larger family of operators that contains bilinear \cz\, ones, as well as several singular bilinear fractional integrals. All these compactness results rely on the Frech\'et-Kolmogorov-Riesz characterization of precompact sets in unweighted Lebesgues spaces $L^p$, see Yosida's book \cite[p. 275]{Y} and the expository note of Hanche-Olsen and Holden \cite{HOH}.

What happens if we change the Lebesgue measure $dx$ with weighted versions $wdx$? This article originates in this natural question. Although seemingly simple, the answer to this question turns out to be more delicate than in the unweighted case. As we shall see, the compactness on products of weighted Lebesgue spaces depends rather crucially on the class of weights $w$ considered. We note that in the linear case the compactness of the 
commutator on weighted spaces was not known until the recent work of Clop and Cruz \cite{CC}. We will rely on their work for the selection of weights and some computations. 

Let then $T$ be a bilinear Calder\'on-Zygmund operator.  For the purposes of this article, this means that $T$ is a bounded map from 
$L^{p_1} \times L^{p_2}$ to $L^p$ with  $1<p_1,p_2<\infty$ and 
\begin{equation}
\frac{1}{p_1} + \frac{1}{p_2} = \frac{1}{p},
\end{equation}
there exists a kernel $K(x,y,z)$ defined away from the diagonal $x=y=z$ 
such that 
\begin{equation}\label{size}
|K(x,y,z)|\lesssim \frac{1}{(|x-y|+|x-z|)^{2n}},
\end{equation}
\begin{equation}\label{regularity}
|\nabla K(x,y,z)|\lesssim  \frac{1}{(|x-y|+|x-z|)^{2n+1}},
\end{equation}
and such that for $f,g\in L^\infty_c$ we have
\begin{equation}\label{association}
T(f,g)(x)=\iint_{\R^{2n}}K(x,y,z)f(y)g(z)\,dydz, \,\,\, x\notin \supp f\cap \supp g.
\end{equation}
See \cite{GT} and the references therein for more on this type of operators.

We will consider the commutators of bilinear \cz \, operators with functions in an appropriate subspace of $BMO$.
Recall that $BMO$ consists of all locally integrable functions $b$ with $\|b\|_{BMO}<\infty$, where
$$\|b\|_{BMO}=\sup_{Q}\dashint_Q |b(x)-\dashint_Q b|\, dx,$$
the supremum is taken over all cubes $Q\in \R^n$, and, as usual, $\dashint_Q b= b_Q$ denotes the average of $b$ over $Q$
$$\dashint_Q b=\frac{1}{|Q|}\int_Q b(x)\,dx.$$  
The relevant subspace of $BMO$ of multiplicative symbols of our focus is $CMO$, which is defined to be the closure of $C_c^\infty(\R^n)$ in the $BMO$ norm.

Given a bilinear \cz, operator $T$ and a function $b$ in  $BMO$, we consider the following commutators with $b$:
$$[b,T]_1(f,g)=T(bf,g)-bT(f,g)$$
and
$$[b,T]_2(f,g)=T(f,bg)-bT(f,g).$$
Furthermore, given ${\bf b}=(b_1,b_2)$ in $BMO\times BMO$, we consider the iterated commutator:
$$[{\bf b},T]=[ [b_2,[b_1,T]_1 ]_2=[b_1,[b_2,T]_2 ]_1.$$
In fact, for bilinear Calder\'on-Zygmund operators $T$ and ${\bf b}=(b_1,b_2)$, we can define $[{\bf b},T]_\al$ for any multi-index ${\bf \al}=(\al_1,\al_2)\in \N^2_0$, formally as
$$[{\bf b}, T]_{\bf \al}(f,g)(x)=$$
$$\iint(b_1(y)-b_1(x))^{\al_1}(b_2(z)-b_2(x))^{\al_2}K(x,y,z)f(y)g(z)\,dydz.$$

Recall that a bilinear operator is said to be (jointly)\footnote{The only notion of compactness in the bilinear setting used here is referred to as {\it joint compactness} in the related previous works, to differentiate it from the weaker notion of {\it separate compactness}. The latter being the compactness of the linear operators obtained when one of the entries in the bilinear one is kept fixed.}
compact if the image of the bi-unit ball 
$$\{(f,g):\|f\|_{L^{p_1}}\leq 1,\|g\|_{L^{p_2}}\leq 1\}$$
 under its action is a precompact set  in $L^p$.
When $1<p_1,p_2<\infty$, $p=\frac{p_1p_2}{p_1+p_2}\geq 1$, 
$\alpha_1, \alpha_2= 0$ or $1$, and ${\bf b}$ in $ CMO\times CMO$, we have that
$$[{\bf b},T]_{\al}:L^{p_1}\times L^{p_2}\ra L^p$$
is a compact bilinear operator; see \cite{BT}.  In this note we will consider what happens on weighted Lebesgue spaces.

Given ${\bf p}=(p_1,p_2)\in (1,\infty)\times (1, \infty)$ and a vector weight  ${\bf w}=(w_1,w_2)$, let
$$\nu_{\bf w}=\nu_{\bf w,p}=w_1^{\frac{p}{p_1}}w_2^{\frac{p}{p_2}}.$$
The vector weight ${\bf w}$ belongs to the class ${\bf A}_{\bf p}$ provided
$$[{\bf w}]_{{\bf A}_{\bf p}}=\sup_Q\left(\dashint_Q \nu_{\bf w}\right)\left(\dashint_Q w_1^{1-p_1'}\right)^{\frac{p}{p_1'}}\left(\dashint_Q w_2^{1-p_2'}\right)^{\frac{p}{p_2'}}<\infty.$$
In \cite{LOPTT}, Lerner et al  proved that
\begin{equation}\label{vecAp} {\bf w}\in {\bf A}_{\bf p} \Leftrightarrow \left\{\begin{array}{l} \nu_{\bf w}\in A_{2p} \\ \sigma_1=w_1^{1-p_1'}\in A_{2p_1'} \\ \sigma_2=w_2^{1-p_2'}\in A_{2p_2'}. \end{array} \right.\end{equation}
Recall that the classical Muckenhoupt class $A_p$ consists of non-negative weights $w$,  which are locally integrable and such that
\begin{equation}\label{Ap}
[w]_{A_p}=\sup_Q \left(\dashint_Q w\right)\left(\dashint_Q w^{1-p'}\right)^{\frac{p}{p'}}<\infty.
\end{equation}
The weights in the class ${\bf A}_{\bf p}$ characterize the boundedness of the maximal function
$$
\M: L^{p_1}(w_1)\times L^{p_1}(w_1) \to L^{p}(\nu_{\bf w}),
$$
where 
$$
\M(f, g)(x)=\displaystyle\sup_{Q\ni x} \Big(\dashint_Q|f(y)|\,dy\Big)\Big(\dashint_Q|g(z)|\,dz\Big).
$$

From \eqref{vecAp} we can see that when $p\geq 1$ we have
\begin{equation}\label{contained}
A_p\times A_p\subsetneq A_{\min(p_1,p_2)}\times A_{\min(p_1,p_2)}\subsetneq A_{p_1}\times A_{p_2}\subsetneq {\bf A}_{\bf p}.
\end{equation}
The first two containments follow from well known properties of the (scalar) $A_p$ classes and the last  containment is proved in \cite{LOPTT} (see Section \ref{closing} for a new example of the strictness of this containment).
Moreover, we also note that  
\begin{equation}\label{givesAp}
{\bf w}\in A_p\times A_p \implies \nu_{\bf w} \in A_p.
\end{equation}
Indeed,
by H\"older's inequality
$$\Big(\dashint_Q \nu_{\bf w,p}\Big)\Big(\dashint_Q\nu_{\bf w,p}^{1-p'}\Big)^{p-1}\leq [w_1]_{A_p}^{\frac{p}{p_1}}[w_2]_{A_p}^{\frac{p}{p_2}}.$$

It  was shown in \cite{LOPTT} that if $w\in {\bf A}_{\bf p}$ and $T$ is a bilinear Calder\'on-Zygmund operator, then $T$ is bounded from $L^{p_1}(w_1)\times L^{p_2}(w_2)$ into $L^p(\nu_{\bf w})$ and the same result holds for the first order commutator. The boundedness of the iterated commutator on 
weighted Lebesgue spaces in the case of ${\bf A}_{\bf p}$ weights was obtained by P\'erez et al in \cite{PPTT}. The case of product of classical weights 
was considered also by Tang \cite{tang}.

The goal of this paper is to show that the improving effect of the bilinear commutators caries over to the weighted setting when we consider the ``appropriate'' class of weights.  We have the following theorem.

\begin{theorem} \label{singlecomm} Suppose ${\bf p}\in (1,\infty)\times (1, \infty)$, $p=\frac{p_1p_2}{p_1+p_2}> 1$, $b\in CMO$, and ${\bf w}\in A_p\times A_p$. Then $[b,T]_1$ and $[b,T]_2$ are compact operators from $L^{p_1}(w_1)\times L^{p_2}(w_2)$ to $L^p(\nu_{\bf w})$.
\end{theorem}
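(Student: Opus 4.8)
The plan is to follow the by-now-standard three-step scheme for compactness of commutators, adapted to the weighted setting. Since the two commutators $[b,T]_1$ and $[b,T]_2$ are symmetric, it suffices to treat $[b,T]_1$.

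\textbf{Density reduction.} Because $CMO$ is the closure of $C_c^\infty(\R^n)$ in the $BMO$ norm, I would pick $b_j\in C_c^\infty$ with $\|b-b_j\|_{BMO}\to 0$. As ${\bf w}\in A_p\times A_p\subset{\bf A}_{\bf p}$ by \eqref{contained}, the weighted boundedness of the first-order commutator from \cite{LOPTT} yields
$$\|[b-b_j,T]_1\|\lesssim \|b-b_j\|_{BMO}\to 0,$$
where the operator norm is taken from $L^{p_1}(w_1)\times L^{p_2}(w_2)$ to $L^p(\nu_{\bf w})$. Since the space of compact bilinear operators is closed in the operator norm, it suffices to prove the theorem for $b\in C_c^\infty$.

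\textbf{Weighted compactness criterion.} The key structural fact is that ${\bf w}\in A_p\times A_p$ forces $\nu_{\bf w}\in A_p$, which is exactly \eqref{givesAp}. This is what permits invoking the weighted Fréchet–Kolmogorov–Riesz characterization of precompactness in $L^p(\nu_{\bf w})$ from Clop–Cruz \cite{CC}. Writing $F=[b,T]_1(f,g)$ with $(f,g)$ in the bi-unit ball, I must verify: (i) uniform boundedness $\sup\|F\|_{L^p(\nu_{\bf w})}<\infty$; (ii) uniform decay $\lim_{A\to\infty}\sup\|F\chi_{\{|x|>A\}}\|_{L^p(\nu_{\bf w})}=0$; and (iii) uniform equicontinuity $\lim_{|t|\to 0}\sup\|F(\cdot+t)-F\|_{L^p(\nu_{\bf w})}=0$. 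Condition (i) is immediate from the weighted boundedness of the commutator. For (ii) I would exploit that $b$ has compact support: when $x$ lies far from $\supp b$ and from the supports of $f,g$, the size estimate \eqref{size} together with the doubling and reverse-Hölder properties of $A_p$ weights forces the tail to be uniformly small.

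\textbf{The equicontinuity estimate and main obstacle.} For (iii) I would write the translated difference as
$$\iint (b(y)-b(x+t))[K(x+t,y,z)-K(x,y,z)]f(y)g(z)\,dydz + (b(x)-b(x+t))\,T(f,g)(x).$$
The second term is controlled by a Lipschitz bound $|b(x)-b(x+t)|\lesssim |t|$ and the boundedness of $T$ on $L^p(\nu_{\bf w})$, so its norm vanishes as $|t|\to 0$. For the first term I would split the domain into the near-diagonal region $|x-y|+|x-z|\lesssim |t|$ and its complement; on the complement the regularity \eqref{regularity} gives the extra gain $|t|/(|x-y|+|x-z|)$, while on the near-diagonal piece \eqref{size} and the support of $b$ keep the contribution small. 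The main obstacle is precisely condition (iii) in the \emph{weighted} norm: since $\nu_{\bf w}\,dx$ is not translation invariant, the pointwise kernel gain must be transferred into a genuine gain in $L^p(\nu_{\bf w})$ by weighted maximal-function and weighted Hölder arguments (following the computations of \cite{CC}), all while preserving uniformity over the bi-unit ball. This is exactly where the stronger hypothesis ${\bf w}\in A_p\times A_p$, rather than merely ${\bf w}\in{\bf A}_{\bf p}$, is needed: it guarantees $\nu_{\bf w}\in A_p$ and hence the validity of the weighted compactness criterion.
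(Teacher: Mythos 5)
Your overall skeleton --- reduce to $b\in C_c^\infty$ by density and the weighted commutator bound, then verify the three conditions of the Clop--Cruz weighted Fr\'echet--Kolmogorov--Riesz criterion, using \eqref{givesAp} to place $\nu_{\bf w}$ in $A_p$ --- is exactly the paper's. The genuine gap is in your execution of the equicontinuity condition (iii), and it is precisely the gap the paper's smooth truncation device is built to close. You propose to work directly with $T$ and the untruncated kernel $K$. First, the representation $T(f,g)(x)=\iint K(x,y,z)f(y)g(z)\,dydz$ is only valid for $x\notin\supp f\cap\supp g$, so the term $(b(x)-b(x+t))\,T(f,g)(x)$ cannot be read off the kernel for arbitrary $(f,g)$ in the bi-unit ball, and bounding it by ``the boundedness of $T$'' conflates a pointwise object with the operator. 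The paper instead replaces $T$ by a smoothly truncated $T^\delta$ (defined pointwise everywhere), proves in Lemma~\ref{approx} that $[b,T^\delta]_1\to[b,T]_1$ in operator norm as $\delta\to 0$ for smooth $b$, and controls the corresponding term by $|t|\,\|\nabla b\|_\infty\,\tilde T_{*}(f,g)(x)$; the weighted boundedness of the maximal truncated operator $\tilde T_{*}$ is itself a nontrivial input, requiring the Cotlar-type inequality \eqref{old} from \cite{GT2}, which your proposal omits. Second, your near-diagonal region $|x-y|+|x-z|\lesssim|t|$ is centered at the singularity of $K(x+t,\cdot,\cdot)$, so the natural bound it produces is a maximal function evaluated at $x+t$, not at $x$; since $\nu_{\bf w}\,dx$ is not translation invariant, this cannot simply be translated back uniformly over the unit ball. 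In the paper this term does not arise at all: for fixed $\delta$ and $|t|<\delta/4$ one has $K^\delta(x+t,y,z)-K^\delta(x,y,z)=0$ whenever $\max(|x-y|,|x-z|)\le\delta/2$, so only the far region survives and is bounded by $(|t|/\delta)\,\M(f,g)(x)$.

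Two smaller points. Your justification of condition (ii) assumes $x$ is far from ``the supports of $f,g$,'' but $f,g$ are arbitrary unit-ball elements with no support restriction; the actual argument uses only $\supp b$, H\"older against $\sigma_1$ and $\sigma_2$, and the finiteness of $\int_{|x|>R}\nu_{\bf w}(x)|x|^{-np}\,dx$. This is a second place --- besides the compactness criterion itself --- where $\nu_{\bf w}\in A_p$ (hence the hypothesis ${\bf w}\in A_p\times A_p$) is genuinely used, so attributing the hypothesis solely to the precompactness criterion understates its role.
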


A similar result holds also for the iterated commutator.

\begin{theorem} \label{itercomm} 
Suppose ${\bf p}\in (1,\infty)\times (1, \infty)$, $p=\frac{p_1p_2}{p_1+p_2}> 1$, ${\bf b}\in CMO\times CMO$, and 
${\bf w}\in A_{p}\times A_p$. Then $[{\bf b},T]$ is a compact operator from $L^{p_1}(w_1)\times L^{p_2}(w_2)$ to $L^p(\nu_{\bf w})$.
\end{theorem}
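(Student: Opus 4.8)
The plan is to follow the two-step scheme that underlies Theorem~\ref{singlecomm}: first reduce to smooth, compactly supported symbols by density, and then verify a weighted precompactness criterion for the resulting operator. Since ${\bf b}\in CMO\times CMO$, for each $\ep>0$ I can pick ${\bf b}^\ep=(b_1^\ep,b_2^\ep)\in C_c^\infty(\R^n)\times C_c^\infty(\R^n)$ with $\|b_i-b_i^\ep\|_{BMO}<\ep$. Writing $b_i=b_i^\ep+r_i$ with $r_i=b_i-b_i^\ep$ and expanding the kernel factor $(b_1(y)-b_1(x))(b_2(z)-b_2(x))$, the iterated commutator splits as $[{\bf b}^\ep,T]$ plus three terms in each of which at least one factor $r_i$ appears. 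The weighted boundedness of the iterated commutator for ${\bf A}_{\bf p}$ weights from \cite{PPTT}, valid here because $A_p\times A_p\subset{\bf A}_{\bf p}$ by \eqref{contained}, controls the operator norm of each such term by a constant multiple of $\|b_1\|_{BMO}\,\|r_2\|_{BMO}+\|r_1\|_{BMO}\,\|b_2\|_{BMO}+\|r_1\|_{BMO}\,\|r_2\|_{BMO}=O(\ep)$. Since the compact bilinear operators form a norm-closed subspace, it then suffices to prove that $[{\bf b}^\ep,T]$ is compact when $b_1^\ep,b_2^\ep\in C_c^\infty(\R^n)$.

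For such smooth symbols I would verify, for the image of the bi-unit ball of $L^{p_1}(w_1)\times L^{p_2}(w_2)$ in $L^p(\nu_{\bf w})$, the three conditions of the weighted Fréchet-Kolmogorov-Riesz characterization of Clop and Cruz \cite{CC}: (i) uniform boundedness in $L^p(\nu_{\bf w})$; (ii) uniform decay of the tails $\int_{|x|>R}|[{\bf b}^\ep,T](f,g)|^p\,\nu_{\bf w}\,dx\ra 0$ as $R\ra\infty$; and (iii) uniform equicontinuity, $\|[{\bf b}^\ep,T](f,g)(\cdot+h)-[{\bf b}^\ep,T](f,g)\|_{L^p(\nu_{\bf w})}\ra 0$ as $|h|\ra 0$; all uniformly over the bi-unit ball. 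Condition (i) is immediate from \cite{PPTT}, and the compatibility of the target weight with the criterion uses that $\nu_{\bf w}\in A_p$ by \eqref{givesAp}.

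Condition (ii) exploits the compact supports of $b_1^\ep,b_2^\ep$: once $x$ lies outside a fixed neighborhood of $\supp b_1^\ep\cup\supp b_2^\ep$, both symbol differences reduce to $b_1^\ep(y)$ and $b_2^\ep(z)$, which confine $y,z$ to a compact set, and the size estimate \eqref{size} then forces the kernel to decay like $|x|^{-2n}$; integrating against $\nu_{\bf w}\in A_p$, whose growth is at most polynomial, yields the tail bound. For condition (iii) I would split each factor as $(b_i(\cdot)-b_i(x+h))+(b_i(x+h)-b_i(x))$, using the Lipschitz bound on $b_i^\ep$ to handle the symbol increments and the gradient estimate \eqref{regularity} to handle the kernel increments. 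After these splittings, each resulting piece is dominated, up to a factor that is small with $|h|$, by the weighted bilinear maximal function $\M$ or by $T$ and its first-order commutators, all of which are bounded on the relevant spaces since $A_p\times A_p\subset{\bf A}_{\bf p}$.

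The main obstacle is condition (iii). The product structure $(b_1(y)-b_1(x))(b_2(z)-b_2(x))$ generates several cross terms under the shift $x\mapsto x+h$, and each must be dominated by a quantity whose $L^p(\nu_{\bf w})$ norm is uniformly controlled over the bi-unit ball and tends to zero as $|h|\ra 0$. Because $\nu_{\bf w}$ is known only to be an $A_p$ weight, the truncation and smoothing estimates that are transparent in the unweighted arguments of \cite{BT} must be rerun with weighted bounds, adapting the weighted computations of \cite{CC}; carrying the equicontinuity through \emph{both} symbol factors at once, while keeping every bound uniform in $(f,g)$, is the delicate point. The boundedness and decay conditions, by contrast, follow more directly from the support and size properties together with the already-established weighted boundedness.
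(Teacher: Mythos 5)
Your overall architecture --- approximate the $CMO$ symbols by $C_c^\infty$ functions, use the weighted boundedness of the iterated commutator to pass to the limit in operator norm, and then verify the three conditions of the Clop--Cruz weighted Fr\'echet--Kolmogorov--Riesz criterion, with $\nu_{\bf w}\in A_p$ supplied by \eqref{givesAp} --- is exactly the paper's. The symbol-density reduction and the tail estimate (ii) are correct and essentially as in the paper. The problem is that you have left the equicontinuity condition (iii) as an acknowledged ``main obstacle'' without supplying the mechanism that makes it work, and as you have set it up it does not work: you propose to apply the gradient estimate \eqref{regularity} to the kernel increment $K(x+h,y,z)-K(x,y,z)$ for the \emph{untruncated} operator $T$, but the resulting majorant
$$|h|\iint_{\R^{2n}}\frac{|f(y)||g(z)|}{(|x-y|+|x-z|)^{2n+1}}\,dy\,dz$$
diverges because of the non-integrable singularity at $y=z=x$. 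Some device is needed to keep the integration away from the diagonal, and this is precisely the step the paper builds its whole proof around: it introduces a smooth truncation $T^\delta$ whose kernel $K^\delta=\psi^\delta K$ vanishes for $\max(|x-y|,|x-z|)<\delta$, proves (Lemma \ref{approx}) that for smooth symbols $[{\bf b},T^\delta]_\al\to[{\bf b},T]_\al$ in operator norm as $\delta\to0$ (with the gain $\delta^{|\al|}$ coming from the Lipschitz bound on the $b_i$ and a dyadic decomposition controlled by $\M$), and then runs (i)--(iii) for $[{\bf b},T^\delta]$ at \emph{fixed} $\delta$. With the truncation in place, $K^\delta(x+t,\cdot,\cdot)-K^\delta(x,\cdot,\cdot)$ is supported where $\max(|x-y|,|x-z|)>\delta/2$ for $|t|<\delta/4$, and the integral above becomes $\lesssim(|t|/\delta)\,\M(f,g)(x)$. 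Your proof needs this lemma (or an equivalent near-/far-diagonal splitting) stated and proved; without it, condition (iii) is not established.

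A second, smaller omission: after the shift is split off, the term $(b_1(x+t)-b_1(x))\iint(b_2(z)-b_2(x))K^\delta(x,y,z)f(y)g(z)\,dy\,dz$ is not controlled by ``$T$ and its first-order commutators'' applied to $(f,g)$ in any direct way; the paper bounds it by $|t|\,\|\nabla b_1\|_\infty\bigl(\tilde T_*(f,b_2g)(x)+\|b_2\|_\infty\tilde T_*(f,g)(x)\bigr)$, where $\tilde T_*$ is the maximal truncated bilinear singular integral, and its weighted boundedness is a genuine input, obtained from the pointwise Cotlar-type inequality of Grafakos--Torres (estimate \eqref{old}, or its strengthening \eqref{new} due to Chen). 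You should either invoke this maximal operator bound explicitly or explain how you avoid it; as written, this link in the chain for condition (iii) is also missing.
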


The remainder of the paper is structured as follows. In Section \ref{proof} we give the proofs of Theorems \ref{singlecomm} and \ref{itercomm}, while in Section \ref{closing} we provide a discussion regarding the class of weights assumed in our main results.

\subsection{Acknowledgement} The authors are grateful for several productive conversations with David Cruz-Uribe and Carlos P\'erez that improved the quality of this article.

\section{Proofs of the theorems}\label{proof}
As pointed out in \cite{CC},  in the linear setting the idea of considering truncated operators to prove compactness results goes back to Krantz and Li \cite{KL}.  We will follow this approach too, but we find convenient to introduce a smooth truncation. (This approach could also be used to simplify some of the computations in \cite{CC} in the linear case.)

 Let $\varphi=\varphi(x,y,z)$ be a non-negative function in $C_c^\infty(\R^{3n})$,  
 $$ \supp \varphi \subset \{(x,y,z):\max(|x|,|y|,|z|)<1\}$$ 
 and such that $$\int_{\R^{3n}} \varphi(u)\, du=1.$$
For $\delta>0$ let $\chi^\delta=\chi^\delta(x,y,z)$ be the characteristic function of the set $$\{(x,y,z):\max(|x-y|,|x-z|)\geq \frac{3\delta}{2} \},$$
and let 
$$\psi^\delta=\varphi_\delta * \chi^\delta,$$ where $$\varphi_\delta(x,y,z)= ( \delta/4)^{-3n} \varphi_\delta(4x/\delta,4y/\delta,4z/\delta).$$
Clearly we have that 
$\psi^\delta \in C^\infty$, 
$$\supp \psi^\delta  \subset  \{(x,y,z):  \max(|x-y|,|x-z|) \geq \delta \},$$  
$\psi^\delta (x,y,z)= 1$ if $\max(|x-y|,|x-z|)> 2 \delta$, and $\|\psi^\delta\|_{L^\infty}\leq 1$.  
Moreover, $\nabla \psi^\delta$ is not zero only if $\max(|x-y|,|x-z|)\approx \delta$
and $\|\nabla \psi^\delta\|_{L^\infty}\lesssim 1/\delta$.  Given a kernel $K$ associated to a \cz \, operator $T$, we define the truncated kernel
$$
K^\delta(x,y,z)= \psi^\delta (x,y,z) K(x,y,z).
$$
It follows that $K^\delta$ satisfies the same size and regularity  estimates of $K$, \eqref{size} and  \eqref{regularity},  with a constant $C$ independent of $\delta$.
%
%
We let  $T^\delta(f,g)$ be the operator defined pointwise by $K^\delta$ through \eqref{association}, now for all $x\in \R^n$.  
We have the following lemma.

\begin{lemma} \label{approx} 
If ${\bf b}\in C_c^\infty\times C_c^\infty,$ then
$$|[{\bf b},T^\delta]_\al(f,g)(x)-[{\bf b},T]_\al(f,g)(x)|\lesssim \|\nabla b_1\|^{\al_1}_\infty\|\nabla b_2\|_\infty^{\al_2} \delta^{|\al|}\M(f,g)(x).$$
Consequently, if ${\bf w}\in {\bf A}_{\bf p}$ we have
$$\lim_{\delta \ra 0}\|[{\bf b},T^\delta]_\al-[{\bf b},T]_\al\|_{L^{p_1}(w_1)\times L^{p_2}(w_2) \ra L^p(\nu_{\bf w})}=0.$$
\end{lemma}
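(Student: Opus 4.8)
The plan is to prove the pointwise estimate first and then deduce the operator-norm convergence from the boundedness of the bilinear maximal function $\M$. For the pointwise bound, I would write the difference of the two commutators as a single integral against the kernel difference $K - K^\delta = (1-\psi^\delta)K$. Explicitly,
$$[{\bf b},T^\delta]_\al(f,g)(x)-[{\bf b},T]_\al(f,g)(x) = -\iint (b_1(y)-b_1(x))^{\al_1}(b_2(z)-b_2(x))^{\al_2}(1-\psi^\delta(x,y,z))K(x,y,z)f(y)g(z)\,dydz.$$
The key observation is that $1-\psi^\delta$ is supported where $\max(|x-y|,|x-z|)\le 2\delta$, so the entire integral is localized to a region of size comparable to $\delta$ around the point $x$. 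This is exactly where the smoothness of the symbols $b_1,b_2$ (guaranteed by ${\bf b}\in C_c^\infty\times C_c^\infty$) pays off.

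On that support, I would use the mean value theorem to estimate the symbol factors: $|b_i(\cdot)-b_i(x)| \le \|\nabla b_i\|_\infty \max(|x-y|,|x-z|) \lesssim \|\nabla b_i\|_\infty\,\delta$ for each index $i$ that appears (i.e.\ whenever $\al_i=1$), producing the factor $\|\nabla b_1\|_\infty^{\al_1}\|\nabla b_2\|_\infty^{\al_2}\,\delta^{|\al|}$. For the remaining kernel and functions I would invoke the size estimate \eqref{size}, namely $|K(x,y,z)|\lesssim (|x-y|+|x-z|)^{-2n}$, and then carry out a standard dyadic annular decomposition of the region $\{\max(|x-y|,|x-z|)\lesssim \delta\}$ into shells where $|x-y|+|x-z|\approx 2^{-j}\delta$. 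On each shell the kernel is of size $\approx (2^{-j}\delta)^{-2n}$ while the shell has measure $\approx (2^{-j}\delta)^{2n}$, and averaging $f$ and $g$ over the corresponding cube bounds the shell's contribution by $\M(f,g)(x)$ times a summable geometric factor. Summing over $j$ recovers $\M(f,g)(x)$, which yields the claimed pointwise inequality.

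For the second, quantitative conclusion, I would simply take $L^p(\nu_{\bf w})$ norms of both sides of the pointwise estimate. Since ${\bf b}\in C_c^\infty\times C_c^\infty$ the factor $\|\nabla b_1\|_\infty^{\al_1}\|\nabla b_2\|_\infty^{\al_2}$ is a finite constant, so
$$\|[{\bf b},T^\delta]_\al(f,g)-[{\bf b},T]_\al(f,g)\|_{L^p(\nu_{\bf w})} \lesssim \delta^{|\al|}\,\|\M(f,g)\|_{L^p(\nu_{\bf w})}.$$
The boundedness result recorded earlier in the excerpt states that for ${\bf w}\in {\bf A}_{\bf p}$ the maximal function satisfies $\M\colon L^{p_1}(w_1)\times L^{p_2}(w_2)\to L^p(\nu_{\bf w})$ boundedly, so $\|\M(f,g)\|_{L^p(\nu_{\bf w})}\lesssim \|f\|_{L^{p_1}(w_1)}\|g\|_{L^{p_2}(w_2)}$. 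Taking the supremum over $f,g$ in the respective unit balls and letting $\delta\to0$ gives the operator-norm limit, provided $|\al|\ge 1$, which is the relevant case here.

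I expect the main obstacle to be the careful bookkeeping in the annular decomposition: one must verify that the averages produced on each dyadic shell are genuinely dominated by a single evaluation of $\M(f,g)$ at the fixed point $x$ (rather than merely by a maximal function at a shifted center), and that the resulting geometric series in $j$ converges with a constant independent of $\delta$. The size estimate \eqref{size} is exactly strong enough to make this work, and no use of the gradient estimate \eqref{regularity} on $K$ is needed for this lemma — only the smoothness of the symbols enters. Once the localization to $\max(|x-y|,|x-z|)\lesssim\delta$ and the mean value estimate are in place, the rest is routine.
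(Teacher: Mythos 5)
Your overall architecture matches the paper's: localize the difference to $\{\max(|x-y|,|x-z|)\le 2\delta\}$ via the support of $1-\psi^\delta$, exploit the smoothness of the symbols there, decompose dyadically into shells, and finish with the boundedness of $\M$ on weighted spaces. But there is a genuine gap in the order in which you apply the mean value theorem. You extract the full factor $\delta^{|\al|}$ from the symbol differences \emph{before} integrating, which leaves you with
$$\iint_{\max(|x-y|,|x-z|)\le 2\delta}\frac{|f(y)||g(z)|}{(|x-y|+|x-z|)^{2n}}\,dy\,dz,$$
and this quantity is \emph{not} controlled by $\M(f,g)(x)$: on the $j$-th dyadic shell the kernel size $(2^{-j}\delta)^{-2n}$ exactly cancels the shell measure $(2^{-j}\delta)^{2n}$, so each shell contributes a full $\approx\M(f,g)(x)$ with no decay in $j$, and the sum over $j\ge 0$ diverges (already for $f=g=1$ the integral is logarithmically divergent near the diagonal). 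The ``summable geometric factor'' you invoke does not exist at this stage; the size estimate \eqref{size} alone is exactly \emph{not} strong enough, contrary to your closing remark.

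The fix is what the paper does: keep the distance factor from the mean value theorem inside the integral, i.e.\ use $|b_i(\cdot)-b_i(x)|\le\|\nabla b_i\|_\infty(|x-y|+|x-z|)$ rather than $\lesssim\|\nabla b_i\|_\infty\,\delta$, so that the effective kernel becomes $(|x-y|+|x-z|)^{-(2n-|\al|)}$. Then the $j$-th shell contributes $\approx(2^{-j}\delta)^{|\al|}\M(f,g)(x)$, the geometric series converges, and its sum is $\approx\delta^{|\al|}\M(f,g)(x)$ --- this is where the factor $\delta^{|\al|}$ is actually produced, not from a pointwise bound on the symbols. Everything else in your proposal (the single-integral formulation with $(1-\psi^\delta)K$, the observation that the regularity estimate \eqref{regularity} is not needed here, the care about shells versus a single evaluation of $\M$ at $x$, and the deduction of operator-norm convergence from the ${\bf A}_{\bf p}$ boundedness of $\M$ when $|\al|\ge 1$) is correct and agrees with the paper.
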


 \begin{proof} 
We adapt the proof  given in  \cite[Lemma 7]{CC} for the linear version of the result. For simplicity we consider the case 
${\bf \alpha}=(1,0)$; the other cases are similar. 
We have,
\begin{align*}
& \left| [ b,T^\delta]_1(f,g)(x)-[{b},T]_1(f,g)(x)\right| \\ 
& \lesssim 
\iint_{ \max(|x-y|,|x-z|) \leq 2\delta}|b(y)-b(x)| |K(x,y,z) f(y)g(z)|\,dydz\\
&
\,\,\, + \iint_{\delta \leq \max(|x-y|,|x-z|) \leq 2\delta}|b(y)-b(x)| |K^\delta(x,y,z)f(y)g(z)|\,dydz\\
&
\lesssim \|\nabla b\|_{L^\infty}
 \iint_{\max(|x-y|,|x-z|) \leq 2\delta} \frac{|f(y)|\,|g(z)|}{  (|x-y| +|x-z|)^{2n-1} }  \,dydz\\
&
\lesssim \|\nabla b\|_{L^\infty}
\sum_{j\geq 0}\iint_{2^{-j}\delta \leq \max(|x-y|,|x-z|) \leq2^{-j+1}\delta}  \frac{|f(y)|\,|g(z)|}{  (|x-y| +|x-z|)^{2n-1} } \,dydz\\
&
\lesssim \|\nabla b\|_{L^\infty}
\sum_{j\geq 0} \left(
\int_{|x-y| \leq2 ^{-j+1}\delta} |f(y)|\,dy 
 \int_{2^{-j}\delta \leq|x-z| \leq2 ^{-j+1}\delta}\frac{ |g(z)|}{  |x-z|^{2n-1} } \,dz
\right. \\
&
\,\,\, + \left.  \int_{2^{-j}\delta \leq|x-y| \leq 2 ^{-j+1}\delta} \frac{|f(y)|}{ |x-y|^{2n-1} }\,dy \int_{ |x-z| \leq2 ^{-j}\delta} |g(z)|\,dz\right) \\
&
\lesssim   \|\nabla b\|_{L^\infty}
\sum_{j\geq 0}  (2^{-j}\delta)^{1-2n}\left( 
 \int_{|x-y| \lesssim 2^{-j+1} \delta} {|f(y)|}\,dy\!\!
\int_{|z-y| \lesssim 2^{-j+1}\delta} {|g(z)|}\, dz\right)\\
&
\lesssim   \|\nabla b\|_{L^\infty} \, \delta \,
\sum_{j \geq 0} 2^{-j} \M(f,g)(x), 
\end{align*}
and the rest of the result follows from the boundedness properties of the maximal function $\M$.
\end{proof}

 Lemma \ref{approx} shows that $[b,T^\delta]_\al$ converges in operator norm to $[b,T^\delta]_\al$ provided the functions $b_1$ and $b_2$ are smooth enough. Therefore, in order to prove that any of the commutators $[{\bf b},T]_\al$ are compact it 
 suffices\footnote{As in the linear case, the limit in the operator norm of compact operators is compact.}
  to work with $[{\bf b},T^\delta]_\al$ for a fixed $\delta$ and our estimates may depend on $\delta$. Moreover, since the bounds of the commutators with $BMO$ functions are of the form 
$$
 \|[{\bf b}, T]_\alpha(f,g)\|_{L^p(\nu_{\bf w})} \lesssim \|b_1\|_{BMO}^{\alpha_1}  |b_2\|_{BMO}^{\alpha_2}   \|f\|_{L^{p_1}(w_1)} \|g\|_{L^{p_2}(w_2) },
$$ 
 to show compactness when working with symbols in $CMO$  we may also assume
  ${\bf b} \in C_c^\infty \times C_c^\infty $ and the estimates may depend on ${\bf b}$ too.

A relevant observation made in \cite[Theorem 5]{CC} is that there exists a sufficient condition for precompactness in $L^r(w)$ when the weight is assumed, crucially for the argument to work, in $A_r$. By adapting the arguments in \cite{HOH}, and, in particular, circumventing the non-translation invariance of $L^r(w)$, the authors in \cite{CC} obtained the following weighted variant of the Frech\'et-Kolmogorov-Riesz result:

\smallskip

\emph{Let $1<r<\infty$ and $w\in A_r$ and let  $\mathcal K\subset L^r(w)$. If 
\begin{enumerate}[{\rm (i)}]
\item $\mathcal K$ is bounded in $L^r(w)$;
\item $\displaystyle\lim_{A\ra \infty} \int_{|x|>A}|f(x)|^r\,w(x) \,dx=0$ uniformly for $f\in \mathcal K$;
\item $\displaystyle\lim_{t\ra 0}\|f(\cdot+t)-f\|_{L^r(w)}=0$ uniformly for $f\in \mathcal K$;
\end{enumerate}
then $\mathcal K$ is precompact in $L^r(w)$.
}

\smallskip

Let us immediately note now that our choice for the class of vector weights in Theorems \ref{singlecomm} and \ref{itercomm} is dictated by the previous compactness criterion. In both our results we will need the weight $\nu_{\bf w, p}\in A_p$ to apply the above version of the Frech\'et-Kolmogorov-Riesz theorem.
In general, if ${\bf w}\in A_{p_1}\times A_{p_2}$ or ${\bf w}\in {\bf A}_{\bf p}$, the best class that $\nu_{\bf w, p}$ belongs to is $A_{2p}$. However, 
as we noticed in \eqref{givesAp}, if ${\bf w}\in A_{p}\times A_{p}$ then $\nu_{\bf w, p}$ is actually in $A_{p}$.  We also point out there there exists examples with ${\bf w\in  A_p}$ and $\nu_{\bf w}\in A_p$, but ${\bf w}\notin A_p\times A_p$ (see Section \ref{closing}).


\begin{proof}[Proof of Theorem \ref{singlecomm}] We will work with the commutator in the first variable; by symmetry, the proof for the other commutator is identical. 
As already pointed out, we may fix $\delta >0$, assume $b\in C_c^\infty$, and study $[b,T^\delta]_1$.  
Suppose $f,g$ belong to
$$B_1(L^{p_1}(w_1))\times B_1(L^{p_2}(w_2))=\{(f,g): \|f\|_{L^{p_1}(w_1)}, \|g\|_{L^{p_2}(w_2)}\leq 1\},$$ with $w_1$ and $w_2$ in $A_p$. 
We need to show that the following conditions hold:

\smallskip

\begin{enumerate}[{\rm (a)}]
\item $[b, T^\delta]_1(B_1(L^{p_1}(w_1))\times B_1(L^{p_2}(w_2))$ is bounded in $L^p(\nu_{\bf w})$;
\item $\displaystyle\lim_{R\ra \infty}\int_{|x|>R}[b, T^\delta]_1 (f,g)(x)^p\nu_{\bf w}\,dx=0$;
\item $\displaystyle\lim_{t\ra 0}\|[b, T^\delta]_1 (f,g)(\cdot+t)-[b, T^\delta]_1 (f,g)\|_{L^p(\nu_{\bf w})}=0.$
\end{enumerate}

\smallskip

It is clear that the first condition (a) holds since 
$$[b,T^\delta]_1: L^{p_1}(w_1)\times L^{p_2}(w_2)\to L^p(\nu_{\bf w})$$
 is bounded when ${\bf w}\in A_p\times A_p\subset {\bf A}_{\bf p}$.

We now show that the second condition (b) holds. It is worth pointing out that for our calculations to work, we need the restrictive assumption $\nu_{\bf w} \in A_p$ which holds since ${\bf w}\in A_p\times A_p$. Let $A$ be large enough so that $\supp b\subset B_A(0)$ and let $R\geq \max (2A,1)$. Then, for $|x|>R$ we have
\begin{align*}
|[b,T^\delta]_1(f,g)(x)|&\leq \|b\|_\infty\int_{\supp b} \int_{\R^{n}}\frac{|f(y)||g(z)|}{(|x-y|+|x-z|)^{2n}}\,dydz \\
&\lesssim \|b\|_\infty\int_{\supp b} |f(y)|\int_{\R^{n}}\frac{|g(z)|}{(|x|+|x-z|)^{2n}}\,dydz\\
&\leq \|b\|_\infty  \|f\|_{L^{p_1}(w_1)}\sigma_1(B_A(0))^{1/p_1'}\!\!\int_{\R^{n}}\frac{|g(z)|}{(|x|+|x-z|)^{2n}}\,dz\\
&\leq \frac{\|b\|_\infty}{|x|^n}\|f\|_{L^{p_1}(w_1)}\sigma_1(B_A(0))^{1/p_1'}\int \frac{|g(z)|}{(|x|+|x-z|)^{n}}\,dz.
\end{align*}

Now, since $|x|>1$, it follows that $|z|+1 \lesssim |z-x|+|x|$
and
$$\int_{\R^{n}}\frac{|g(z)|}{(|x|+|x-z|)^{n}}\,dz \lesssim \|g\|_{L^{p_2}(w_2)}\left(\int_{\R^n}\frac{\sigma_2(z)}{(1+|z|)^{np_2'}}\,dz\right)^{1/p_2'}.$$
Since $w_2 \in A_p \subset  A_{p_2}$, we have $\sigma_2  \in A_{p_2'}$, and hence 
$$\int_{\R^n}\frac{\sigma_2(z)}{(1+|z|)^{np_2'}}\,dz < \infty ;$$
see for example \cite[p. 412]{GCRdF} or \cite[p. 209]{stein}. 
It follows that for $|x|>R$, 
$$
|[b,T^\delta]_1(f,g)(x)| \lesssim \frac{1}{|x|^n}.
$$

Raising both sides of the last inequality to the power $p$ and integrating over $|x|>R$ we have
$$\int_{|x|>R}|[b,T^\delta]_1(f,g)(x)|^p\nu_{\bf w}\,dx\lesssim_{b,{\bf p},{\bf w}} \int_{|x|>R}\frac{\nu_{\bf w}(x)}{|x|^{np}}\,dx\ra 0, \quad R\ra \infty,$$
where we used again the fact that for $v \in A_r$, $r>1$,
$$
     \int_{\R^n}\frac{v(x)}{(1+|x|)^{nr}}\,dx < \infty.
$$

We now show the uniform equicontinuity estimate (c).  Note that
\begin{align*}\lefteqn{ [b,T^\delta]_1(f,g)(x+t)-[b,T^\delta]_1(f,g)(x)}\\
&=\iint_{\R^{2n}}(b(y)-b(x+t))K^\delta(x+t,y,z)f(y)g(z)\,dydz\\
&\qquad \qquad \qquad \qquad -\iint_{\R^{2n}}(b(y)-b(x))K^\delta(x,y,z)f(y)g(z)\,dydz\\
&= (b(x)-b(x+t))\iint_{\R^{2n}}K^\delta(x,y,z)f(y)g(z)\,dydz  \,\,\,\,+\\
& \,\, \iint_{\R^{2n}}(b(y)-b(x+t))(K^\delta(x+t,y,z)-K^\delta(x,y,z))f(y)g(z)\,dydz\\
&=I_1(t,x)+I_2(t,x).
\end{align*}
To estimate $I_1$, we observe first that
$$|I_1(t,x)|\leq |t|\|\nabla b\|_\infty \tilde T_*(f,g)(x),$$
where  $\tilde T_{*}(f, g)$ denotes the
maximal truncated bilinear singular integral operator
$$
\tilde T_{*}(f, g)(x) = \sup_{\delta>0}|T^\delta(f,g)(x)| = \sup_{\delta>0}\left|\iint_{\R^{2n}}K^\delta (x, y, z)f(y)g(z)\,dydz\right|.
$$
Note that with similar arguments to the ones used in the proof of Lemma~\ref{approx},
$$
\left| T^\delta(f,g)(x) - \iint_{\max (|x-y|,|x-z|)\geq \delta}K (x, y, z)f(y)g(z)\,dydz\right|
\lesssim
$$
$$
\left|  \iint_{\delta<\max (|x-y|,|x-z|)\leq 2\delta}\frac{|f(y)g(z)|}{(|x-y|+|x-z|)^{2n}}\,dydz\right| \lesssim \M(f,g)(x).
$$

It follows then that 
\begin{equation}\label{equivalentT}
\tilde T_{*}(f, g)(x) \lesssim T_{*}(f, g)(x) + \M(f,g)(x),
\end{equation}
where now
$$
 T_{*}(f, g)(x) = \sup_{\delta>0}\left|\iint_{\max (|x-y|,|x-z|)\geq \delta } K (x, y, z)f(y)g(z)\,dydz\right|.
$$

By the pointwise estimate \cite[(2.1)]{GT2}, for all $\eta>0$
\begin{equation}\label{old}
 T_{*}(f, g)(x) \lesssim_\eta (M(|T(f,g)|^\eta)(x))^{1/\eta} +  Mf(x)Mg(x),
\end{equation}
where $M$ is the Hardy-Littlewood maximal function. From \eqref{equivalentT} and \eqref{old} (with $\eta =1$ in our current situation)
 it easily follows that
$$
\tilde T_{*} : L^{p_1}(w_1) \times  L^{p_2}(w_2) \ra L^p(\nu_{\bf w})
$$
for ${\bf w}\in A_p \times A_p$. We obtain then
$$\|I_1(t,x)\|_{L^p(\nu_{\bf w})}\lesssim |t|.$$

To estimate $I_2$, we observe that, if $t<\delta/4$,  
$$K^\delta(x+t,y,z)-K^\delta(x,y,z)=~0$$
  when  $\max(|x-y|, |x-z|) \leq \delta/2$. Therefore,
with what are by now familiar arguments,
 \begin{align*}
& \left| \iint (b(y)-b(x+t))(K^\delta(x+t,y,z)-K^\delta(x,y,z))f(y)g(z)\,dydz \right| \\
&\lesssim \|b\|_\infty|t|\iint_{\max\{|x-y|,|x-z|\}>\delta/2}\frac{|f(y)||g(z)|}{(|x-y|+|x-z|)^{2n+1}}\,dydz\\
&\lesssim \|b\|_\infty|t| \sum_{j\geq 0}\iint_{2^{j-1}\delta < \max\{|x-y|,|x-z|\} \leq 2^{j}\delta}\frac{|f(y)||g(z)|}{(|x-y|+|x-z|)^{2n+1}}\,dydz\\
&\lesssim \frac{|b\|_\infty|t|}{\delta}\M(f,g)(x).
\end{align*}
From the boundedness properties of $\M$ we obtain the desired estimate
$$\|I_2(t,x)\|_{L^p(\nu_{\bf w})}\lesssim |t|.$$
\end{proof}

We concentrate now on the compactness of the iterated commutator. We will show that $[{\bf b}, T^\delta]$ satisfies the corresponding conditions (a), (b) and (c) listed at the beginning of the proof of Theorem \ref{singlecomm}.  The proof is similar to that of Theorem \ref{singlecomm}, but  it is worth pointing out that for the iterated commutator, these conditions hold under the weakest assumption on the class of weights, that is, ${\bf w}\in {\bf A}_{{\bf p}}$. We indicate the needed modifications in the proof below.

\begin{proof}[Proof of Theorem \ref{itercomm}]

As before, we may assume ${\bf b} \in C_c^{\infty}\times C_c^\infty$, fix $\delta>0$ and study $[{\bf b}, T^\delta]$.
Once again, condition (a) holds since $[{\bf b}, T^\delta]$ is bounded from $L^{p_1}(w_1)\times L^{p_2}(w_2)$ to $L^p(\nu_{{\bf w}})$ when ${\bf w} \in {\bf A}_{\bf p}$. Next, we show that condition (b) holds. Let $A$ be large enough so that $\supp b_1 \cup \supp b_2 \subset B_A(0)$ and let $R\geq 2A$. Then, for $|x|\geq R$, we have

\begin{equation*}\begin{split}
&|[{\bf b}, T^{\delta}](f,g)(x)| \lesssim \|b_1\|_{\infty} \|b_2\|_{\infty} \int_{\supp b_1} \int_{\supp b_2} \frac{|f(y)| |g(z)|}{(|x-y|+|x-z|)^{2n}} dy dz \\
&\lesssim \frac{1}{|x|^{2n}} \|b_1\|_{\infty} \|b_2\|_{\infty} \int_{\supp b_1} |f(y)| dy \int_{\supp b_2} |g(z)| dz \\
&\lesssim \frac{1}{|x|^{2n}} \|b_1\|_{\infty} \|b_2\|_{\infty} \|f\|_{L^{p_1}(w_1)} \|g\|_{L^{p_2}(w_2)} \sigma_1(\supp b_1)^{1/{p_1}'} \sigma_2(\supp b_2)^{1/p_2'}.
\end{split}\end{equation*}

We can raise the previous pointwise estimate to the power $p$ and integrate over $|x|>R$ to get
$$\int_{|x|>R} |[{\bf b}, T^{\delta}](f,g)(x)|^p\nu_{\bf w} (x)\,dx$$
$$\leq \left(\|f\|_{L^{p_1}(w_1)} \|g\|_{L^{p_2}(w_2)} \sigma_1(\supp b_1)^{1/p_1'} \sigma_2(\supp b_2)^{1/p_2'}\right)^p
\!\!\int_{|x|>R} \frac{\nu_{{\bf w}}(x)}{|x|^{2np}} dx,
$$
which tends to zero as $R \to \infty$  even if $\nu_{{\bf w}} \in A_{2p}$, and gives (b).

To show that condition (c) also holds, we write
\begin{equation*}\begin{split}
&|[{\bf b}, T^{\delta}](f,g)(x) - [{\bf b}, T^{\delta}](f,g)(x+t)| = \\
&\left| \iint_{\R^{2n}} (b_1(y)-b_1(x)) (b_2(z)-b_2(x)) K^{\delta}(x,y,z) f(y) g(z) dy dz \,\,\, +\right. \\
& \left. \iint_{\R^{2n}} \!\!(b_1(y)-b_1(x+t)) (b_2(z)-b_2(x+t)) K^{\delta}(x+t,y,z) f(y) g(z) dy dz \right| \\
&\leq |I_1(x,t)| + |I_2(x,t)|,
\end{split}\end{equation*}
where
\begin{equation*}
I_1(x,t)= (b_1(x+t)-b_1(x))\!\! \iint_{\R^{2n}} \!\!(b_2(z)-b_2(x)) K^{\delta}(x,y,z)f(y)g(z) dy dz
\end{equation*}
and
\begin{equation*}\begin{split}
&I_2(x,t)=\\
&\iint_{\R^{2n}} (K^{\delta}(x,y,z)(b_2(z)-b_2(x))- K^{\delta}(x+t,y,z)(b_2(z)-b_2(x+t))) \\
&\qquad\qquad\qquad\qquad\qquad\qquad\qquad\times (b_1(y)-b_1(x+t)) f(y) g(z) dy dz.\\
&
\end{split}\end{equation*}

The pointwise estimate of $I_1(x,t)$ can be obtained as in the proof of Theorem \ref{singlecomm},
$$|I_1(x,t)|\leq |t| \|\nabla b_1\|_{\infty}( \tilde T^*(f,b_2g)(x)+\|b_2\|_\infty \tilde T^*(f,g)(x)).$$
To invoke now the boundedness 
$$
\tilde T_{*} : L^{p_1}(w_1) \times  L^{p_2}(w_2) \ra L^p(\nu_{\bf w})
$$
for all ${\bf w} \in {\bf A}_{\bf p}$ and not just ${\bf w} \in {A}_{p} \times {A}_{p}$, we can use instead of 
\eqref{old} a strengthened version of it. Namely,  
\begin{equation}\label{new}
 T_{*}(f, g)(x) \lesssim_\eta (M(|T(f,g)|^\eta)(x))^{1/\eta} +  \M(f,g)(x),
\end{equation}
which is implicit in the arguments in \cite{GT2} and explicit in the article by Chen \cite[(2.1)]{chen}.
Thus, as $|t|\ra 0$,
\begin{equation*}
\|I_1\|_{L^p(\nu_{{\bf w}})} \lesssim  |t|\|\nabla b_1\|_{\infty} \| b_2\|_{\infty} \|f\|_{L^{p_1}(w_1)} \|g\|_{L^{p_2}(w_2)} \longrightarrow 0.
\end{equation*}
Now, we split $I_2$ in two other terms as follows
\begin{equation*}\begin{split}
&I_2(x,t) = \iint_{\R^{2n}} (K^{\delta}(x,y,z)-K^{\delta}(x+t,y,z)) (b_2(z)-b_2(x+t)) \times \\
&\qquad\qquad\qquad\qquad\qquad  \,\,\,\,\,  \qquad\qquad \times(b_1(y)-b_1(x+t))f(y)g(z)dydz \\
&+ \,\,\, (b_2(x+t)-b_2(x)) \iint_{\R^{2n}} (b_1(y)-b_1(x+t)) K^{\delta}(x,y,z) f(y)g(z) dydz \\
&= I_{21}(x,t) + I_{22}(x,t).
\end{split}\end{equation*}
As in Theorem \ref{singlecomm}, the estimate of $I_{21}$, for $t$ sufficiently small reduces to
\begin{align*}
|I_{21}&(x,t)|  \lesssim  \\
& \lesssim |t| \|b_1\|_{\infty} \|b_2\|_{\infty} \iint_{  \max\{|x-y|,|x-z|\}>\delta/2   }\frac{|f(y)| |g(z)|}{(|x-y|+|x-z|)^{2n+1}} \, dydz \\
& \lesssim \frac{  |t| }{\delta} \| b_1\|_{\infty} \|b_2\|_{\infty} \M(f, g)(x),
\end{align*}
which is again an appropriate estimate to obtain (c).
Finally, 
$$|I_{22}(x,t)|  \leq  $$$$
|t| \|\nabla b_2\|_{\infty} \left| \iint_{\max(|x-y|,|x-z|)\geq \delta} \!\!\!\!(b_1(y)-b_1(x+t)) K^\delta(x,y,z) f(y) g(z) dy dz \right|$$
$$\leq |t| \|\nabla b_2\|_{\infty} (\tilde T^*(b_1f,g)(x) + \|b_1\|_{\infty} \tilde T^*(f,g)(x)).$$
Therefore, as $|t|\ra 0$,
\begin{equation*}
\|I_{22}\|_{L^p (\nu_{{\bf w}})} \lesssim |t| \|\nabla b_2\|_{\infty} \|b_1\|_{\infty} \|f\|_{L^{p_1}(w_1)} \|g\|_{L^{p_2}(w_2)} \longrightarrow 0.
\end{equation*}
\end{proof}

\section{Closing remarks}\label{closing}

1. Our results on bilinear commutators highlight one more time the fact that the higher the order of the commutator with $CMO$ symbols, the less singular the operators are. In this article this is reflected in the less restrictive class of weights needed to achieve the estimates (a), (b) and (c). Indeed, in Theorem~\ref{singlecomm}, the assumption $A_p\times A_p$ on the weight is needed both to check condition (b) and to guarantee that the target weight falls in the right class. However, to obtain bilinear compactness in Theorem~ \ref{itercomm} we require the $A_p\times A_p$  assumption about the vector weight only because the sufficient condition from \cite{CC} on $L^p(\nu_{\bf w})$ precompactness requires $\nu_{\bf w}\in A_p$. As already mentioned,  this last condition fails if ${\bf w}$ is only assumed to belong to ${\bf A}_{\bf p}$. Actually, our techniques can be used to obtain a more general theorem by assuming that ${\bf w}\in {\bf A_p}$ and $\nu_{\bf w}\in A_p$ instead of ${\bf w}\in A_p\times A_p$.   
\begin{theorem} \label{singlecomm2} Suppose ${\bf p}\in (1,\infty)\times (1, \infty)$, $p=\frac{p_1p_2}{p_1+p_2}> 1$, $b\in CMO$, and
${\bf w}\in {\bf A_p}$ with $\nu_{\bf w}\in A_p$. Then $[b,T]_1$ and $[b,T]_2$ are compact operators from $L^{p_1}(w_1)\times L^{p_2}(w_2)$ to $L^p(\nu_{\bf w})$.
\end{theorem}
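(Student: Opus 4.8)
The plan is to follow the template of the proof of Theorem \ref{singlecomm} line by line. After the usual reductions---replacing $T$ by a smoothly truncated $T^\delta$ for a fixed $\delta>0$ (Lemma \ref{approx} still applies, as it only uses ${\bf w}\in{\bf A}_{\bf p}$) and assuming $b\in C_c^\infty$---it suffices to verify the three conditions (a), (b), (c) of the weighted Fréchet--Kolmogorov--Riesz criterion for $[b,T^\delta]_1(f,g)$ as $(f,g)$ ranges over the bi-unit ball. The key structural point is that this criterion requires precisely $\nu_{\bf w}\in A_p$, which is now part of the hypothesis rather than a consequence of the stronger assumption ${\bf w}\in A_p\times A_p$.

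Conditions (a) and (c) carry over essentially verbatim. For (a) one only needs the boundedness of $[b,T^\delta]_1$ from $L^{p_1}(w_1)\times L^{p_2}(w_2)$ to $L^p(\nu_{\bf w})$, which holds for every ${\bf w}\in{\bf A}_{\bf p}$. For (c), the estimates of $I_1$ and $I_2$ go through unchanged, provided one invokes the boundedness of $\tilde T_*$ on the weighted spaces via the stronger pointwise bound \eqref{new} (with the bilinear maximal function $\M$) rather than \eqref{old}; this is exactly the device already used in the proof of Theorem \ref{itercomm} to lower the hypothesis from $A_p\times A_p$ to ${\bf A}_{\bf p}$. Hence neither (a) nor (c) uses more than ${\bf w}\in{\bf A}_{\bf p}$.

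The whole difficulty is concentrated in condition (b), and this is where I expect the main obstacle. For $|x|>R$ with $\supp b\subset B_A(0)$ and $R\ge 2A$ one has $b(x)=0$, so $[b,T^\delta]_1(f,g)(x)=T^\delta(bf,g)(x)$, and, using $|x-y|\approx|x|$ for $y\in\supp b$, the proof of Theorem \ref{singlecomm} reduces (b) to the pointwise bound $|[b,T^\delta]_1(f,g)(x)|\lesssim|x|^{-n}$, whose derivation splits into the harmless local factor $\int_{\supp b}|f|\le\|f\|_{L^{p_1}(w_1)}\sigma_1(\supp b)^{1/p_1'}$ and the factor $\int_{\R^n}|g(z)|(1+|z|)^{-n}\,dz\le\|g\|_{L^{p_2}(w_2)}\big(\int_{\R^n}\sigma_2(z)(1+|z|)^{-np_2'}\,dz\big)^{1/p_2'}$. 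In Theorem \ref{singlecomm} the last integral was finite because $w_2\in A_p\subset A_{p_2}$ forces $\sigma_2\in A_{p_2'}$. Under the present weaker hypothesis we only know $\sigma_2\in A_{2p_2'}$, so the key step I must supply is the integrability
$$\int_{\R^n}\frac{\sigma_2(z)}{(1+|z|)^{np_2'}}\,dz<\infty.$$
Once this is in hand, the rest of (b) is identical to Theorem \ref{singlecomm}: raising $|x|^{-n}$ to the power $p$ and integrating against $\nu_{\bf w}$ gives $\int_{|x|>R}|x|^{-np}\nu_{\bf w}\to 0$, which uses only $\nu_{\bf w}\in A_p$.

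To prove the displayed integrability I would exploit both hypotheses simultaneously through the pointwise identity $\nu_{\bf w}^{1-p'}=\sigma_1^{p'/p_1'}\sigma_2^{p'/p_2'}$; since $\nu_{\bf w}\in A_p$ gives $\nu_{\bf w}^{1-p'}\in A_{p'}$, this weight is integrable against $(1+|z|)^{-np'}$. Writing $\sigma_2=\big(\nu_{\bf w}^{1-p'}\big)^{p_2'/p'}\sigma_1^{-p_2'/p_1'}$ and applying Hölder, one is left to absorb the factor $\sigma_1^{-p_2'/p_1'}$ using the information $\sigma_1\in A_{2p_1'}$. The delicate point---and the step I expect to be genuinely hard---is that the decay budget $(1+|z|)^{-np_2'}$ is, through the Hölder exponent $p'/p_2'\ge 1$ (here the inequality $p\le p_2$ is used), almost exactly consumed by the $\nu_{\bf w}^{1-p'}\in A_{p'}$ factor, so the argument must be run on dyadic annuli and lean on the extra room encoded in the factor $2$ of the classes $A_{2p_1'}$ and $A_{2p_2'}$ (equivalently, on the self-improvement/reverse Hölder property of these weights). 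This interplay of the three conditions packaged in ${\bf w}\in{\bf A}_{\bf p}$ together with $\nu_{\bf w}\in A_p$ is exactly what rules out the growth of $\sigma_2$ that would otherwise destroy the uniform tail decay, and making it quantitative is the crux of the argument. The statement for $[b,T]_2$ then follows by the symmetric argument.
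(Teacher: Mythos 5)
Your outline of (a) and (c) is fine, and you have correctly located the crux in condition (b); but the lemma you reduce (b) to --- the finiteness of $\int_{\R^n}\sigma_2(z)(1+|z|)^{-np_2'}\,dz$ under ${\bf w}\in{\bf A_p}$ and $\nu_{\bf w}\in A_p$ --- is \emph{false}, so no amount of dyadic work with the classes $A_{2p_1'}$, $A_{2p_2'}$ can establish it. Take $w_1=1$ and $w_2(x)=|x|^{-n}$ (e.g.\ $p_1=p_2=4$, $p=2$): then $\sigma_1=1\in A_{2p_1'}$, $\sigma_2(z)=|z|^{n(p_2'-1)}\in A_{2p_2'}$ and $\nu_{\bf w}=|x|^{-np/p_2}\in A_1\subset A_p$, so all hypotheses of Theorem \ref{singlecomm2} hold, yet $\sigma_2(z)(1+|z|)^{-np_2'}\approx|z|^{-n}$ at infinity. (The paper's own example $w_1=|x|^{-\al}$, $\al>1$, $w_2=1$ does the same damage to $[b,T]_2$, which you dispose of ``by symmetry''.) The failure is structural: the step $(|x|+|x-z|)^{-2n}\le|x|^{-n}(|x|+|x-z|)^{-n}$, inherited from the proof of Theorem \ref{singlecomm}, discards exactly the half of the kernel decay that becomes indispensable once $\sigma_2$ is only in $A_{2p_2'}$ instead of $A_{p_2'}$; indeed in the example $\sup_{\|g\|_{L^{p_2}(w_2)}\le1}\int|g(z)|(1+|z|)^{-n}dz=\big(\int(1+|z|)^{-np_2'}\sigma_2(z)\,dz\big)^{1/p_2'}=\infty$, so the uniform bound $|[b,T^\delta]_1(f,g)(x)|\lesssim|x|^{-n}$ cannot be reached along your route.

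The repair is to keep the full decay and accept a nonuniform pointwise bound. By H\"older and the openness/doubling of $A_{2p_2'}$,
$$\int_{\R^n}\frac{|g(z)|}{(|x|+|x-z|)^{2n}}\,dz\le\|g\|_{L^{p_2}(w_2)}\Big(\int_{\R^n}\frac{\sigma_2(z)}{(|x|+|x-z|)^{2np_2'}}\,dz\Big)^{1/p_2'}\lesssim\frac{\sigma_2(B(0,2|x|))^{1/p_2'}}{|x|^{2n}},$$
hence $|[b,T^\delta]_1(f,g)(x)|\lesssim_b\sigma_1(B_A(0))^{1/p_1'}\sigma_2(B(0,2|x|))^{1/p_2'}|x|^{-2n}$ for $|x|>R$. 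Raising to the power $p$ and integrating $\nu_{\bf w}$ over the annuli $2^kR<|x|\le2^{k+1}R$, the ${\bf A_p}$ condition on $B_k=B(0,2^{k+2}R)$, in the form $\nu_{\bf w}(B_k)\,\sigma_1(B_k)^{p/p_1'}\sigma_2(B_k)^{p/p_2'}\le[{\bf w}]_{{\bf A_p}}|B_k|^{2p}$, bounds the $k$-th term by a constant times $\big(\sigma_1(B_A(0))/\sigma_1(B_k)\big)^{p/p_1'}$; reverse doubling of $\sigma_1\in A_\infty$ makes this series geometric and sends its sum to $0$ as $R\to\infty$ because $\sigma_1(\R^n)=\infty$. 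Note that this argument uses only ${\bf w}\in{\bf A_p}$ for condition (b); the extra hypothesis $\nu_{\bf w}\in A_p$ is what licenses the weighted Fr\'echet--Kolmogorov--Riesz criterion itself. Your Hölder manipulation with $\nu_{\bf w}^{1-p'}\in A_{p'}$ cannot substitute for this, since it is aimed at proving a statement that is simply not true.
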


\begin{theorem} \label{itercomm2} 
Suppose ${\bf p}\in (1,\infty)\times (1, \infty)$, $p=\frac{p_1p_2}{p_1+p_2}> 1$, ${\bf b}\in CMO\times CMO$, and
${\bf w}\in {\bf A_p}$ with $\nu_{\bf w}\in A_p$. Then $[{\bf b},T]$ is a compact operator from $L^{p_1}(w_1)\times L^{p_2}(w_2)$ to $L^p(\nu_{\bf w})$.
\end{theorem}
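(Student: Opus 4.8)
The plan is to show that the proof of Theorem~\ref{itercomm} carries over essentially unchanged, once one isolates the \emph{single} place in which the stronger hypothesis ${\bf w}\in A_p\times A_p$ was genuinely used. Reading through that argument, the assumption $A_p\times A_p$ enters only through the implication \eqref{givesAp}, that is, to guarantee that the target weight $\nu_{\bf w}$ lies in $A_p$, which is the standing requirement of the weighted Fréchet-Kolmogorov-Riesz precompactness criterion of \cite{CC} on $L^p(\nu_{\bf w})$. Every other estimate there was arranged to depend only on the weakest class ${\bf w}\in {\bf A}_{\bf p}$ (the same class written ${\bf A_p}$ in the statement). Since we now \emph{assume} $\nu_{\bf w}\in A_p$ outright, that criterion is again available and the rest of the proof transfers verbatim.

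Concretely, I would first invoke Lemma~\ref{approx}, valid for all ${\bf w}\in {\bf A}_{\bf p}$, to reduce to a fixed truncation $[{\bf b},T^\delta]$ with ${\bf b}\in C_c^\infty\times C_c^\infty$, since $[{\bf b},T]$ is the operator-norm limit of these. It then suffices to verify conditions (a), (b), and (c) for the image of the bi-unit ball under $[{\bf b},T^\delta]$. Condition (a) is the boundedness $[{\bf b},T^\delta]\colon L^{p_1}(w_1)\times L^{p_2}(w_2)\to L^p(\nu_{\bf w})$, which holds for ${\bf w}\in {\bf A}_{\bf p}$ by \cite{PPTT}. For condition (b), the proof of Theorem~\ref{itercomm} yields the pointwise decay $|[{\bf b},T^\delta](f,g)(x)|\lesssim |x|^{-2n}$ for large $|x|$, reducing matters to $\int_{|x|>R}\nu_{\bf w}(x)|x|^{-2np}\,dx$, which is finite and vanishes as $R\to\infty$ already when $\nu_{\bf w}\in A_{2p}$, hence \emph{a fortiori} under $\nu_{\bf w}\in A_p$. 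For condition (c), I would reuse the splitting $I_1+I_{21}+I_{22}$ of the increment, whose control rests on the boundedness of the maximal truncated operator $\tilde T_*$ and of the bilinear maximal function $\M$; as recorded in the proof of Theorem~\ref{itercomm}, the strengthened pointwise bound \eqref{new} furnishes these for all ${\bf w}\in {\bf A}_{\bf p}$.

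There is in fact no substantial new difficulty: the result is the bookkeeping observation that the iterated commutator is so much less singular that conditions (a)--(c) never require more than ${\bf w}\in {\bf A}_{\bf p}$, with $A_p\times A_p$ serving in Theorem~\ref{itercomm} only as a convenient sufficient condition for $\nu_{\bf w}\in A_p$. The one point meriting a careful check---the closest thing to an obstacle---is to confirm that each inequality in the proof of Theorem~\ref{itercomm}, notably the boundedness of $\tilde T_*$ through \eqref{new} and the tail estimate feeding condition (b), is truly insensitive to whether $\nu_{\bf w}$ lies in $A_p$ or only in $A_{2p}$, so that the \emph{sole} surviving use of $\nu_{\bf w}\in A_p$ is in invoking the precompactness criterion. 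Granting this, the conclusion is immediate.
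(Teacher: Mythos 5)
Your proposal is correct and is essentially the paper's own argument: the paper proves Theorem \ref{itercomm2} precisely by observing (in the remarks surrounding the proof of Theorem \ref{itercomm}) that conditions (a), (b), (c) and Lemma \ref{approx} were all established under the hypothesis ${\bf w}\in {\bf A}_{\bf p}$ alone---via the boundedness from \cite{PPTT}, the tail estimate valid even for $\nu_{\bf w}\in A_{2p}$, and the strengthened maximal-function bound \eqref{new}---so that $A_p\times A_p$ enters only through \eqref{givesAp} to supply $\nu_{\bf w}\in A_p$ for the Clop--Cruz precompactness criterion, which you now assume directly. Nothing further is needed.
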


 As mentioned in the introduction 
 $${\bf w}\in A_p\times A_p \Rightarrow {\bf w\in A_p} \ \text{and} \ \nu_{\bf w}\in A_p.$$ 
 To see that the assumption ${\bf w\in A_p}$ and $\nu_{\bf w}\in A_p$ is indeed weaker, consider the example $w_1(x)=|x|^{-\al}$ where $1<\al<\frac{p_1}{p}=1+\frac{p_1}{p_2}$ and $w_2(x)=1$ on $\R$.  Then $\sigma_1(x)=|x|^{\al(p_1'-1)}$ belongs to $A_{2p_1'}$ since 
 $$\al<1+\frac{p_1}{p_2}<1+p_1=\frac{2p_1'-1}{p_1'-1}.$$
Moreover, $\nu_{\bf w}(x)=|x|^{-\al\frac{p}{p_1}}$ belongs to $A_1 (\subset A_p)$ since $\al\frac{p}{p_1}<1$.  However, the weight $w_1$ does not belong to any $A_p$ class since it is not locally\, integrable.  This vector weight also provides a new example of the properness of the containment $A_{p_1}\times A_{p_2}\subsetneq {\bf A_p}$ from \cite[Section 7]{LOPTT}.
\smallskip

2. 
It is natural to ask whether the sufficient condition about $L^p(w)$ precompactness in \cite{CC} may be extended to include weights $w\in A_q$ with $q>p$. A simple modification of the argument in \cite[p. 275]{Y} gives the following result in this setting:

\smallskip

\emph{Let $1<r<\infty$, $w\in A_\infty$, and $\mathcal K\subset L^r(w)$. If
\begin{enumerate}[{\rm (I)}]
\item $\mathcal K$ is bounded in $L^r(w)$;
\item $\displaystyle\lim_{A\ra \infty} \int_{|x|>A}|f(x)|^r\,wdx=0$ uniformly for $f\in \mathcal K$;
\item $\displaystyle\|f(\cdot+t_1)-f(\cdot+t_2)\|_{L^r(w)}\to 0$ uniformly for $f\in \mathcal K$ as $|t_1-t_2|\to 0$;
\end{enumerate}
then $\mathcal K$ is precompact.
}

\smallskip

This is different than the sufficient condition we employed in the proofs of our main theorems, specifically in the third assumption about equicontinuity. Note that, in general, the non-translation invariance of the measure deems our last condition strictly stronger than the corresponding one in \cite{CC}. Unfortunately, the arguments we used to prove Theorem \ref{itercomm} do not seem to hold anymore in this setting.

\end{document}